\date{}
\theoremstyle{plain}
 \newtheorem{theorem}{Theorem}[section]
\theoremstyle{definition}
 \newtheorem{remark}[theorem]{Remark}
\theoremstyle{definition}
 \newtheorem{example}[theorem]{Example} 
\newtheorem{definition}{Definition}
\begin{document}
\title{On the Hermite problem for cubic irrationalities}
\author{Nadir Murru\\
Department of Mathematics, University of Turin\\
Via Carlo Alberto 10, Turin, 10123, ITALY\\
nadir.murru@unito.it}

\maketitle

\begin{abstract}

In this paper, the Hermite problem has been approached finding a periodic representation (by means of periodic rational or integer sequences) for any cubic irrationality. In other words, the problem of writing cubic irrationals as a periodic sequence of rational or integer numbers has been solved. In particular, a periodic multidimensional continued fraction (with pre--period of length 2 and period of length 3) is proved convergent to a given cubic irrationality, by using the algebraic properties of cubic irrationalities and linear recurrent sequences. This multidimensional continued fraction is derived from a modification of the Jacobi algorithm, which is proved periodic if and only if the inputs are cubic irrationals. Moreover, this representation provides simultaneous rational approximations for cubic irrationals.

\end{abstract}


\section{Introduction}

\indent \indent In 1839, Hermite (see the letters published in 1850, \cite{Her}) posed to Jacobi the problem of generalizing the construction of continued fractions to higher dimensions. In particular, he asked for a method of representing algebraic irrationalities by means of periodic sequences that can highlight algebraic properties and possibly provide rational approximations. Hermite especially focused the attention on cubic irrationalities. \\
\indent Continued fractions completely solve this problem for quadratic irrationalities, but the problem for algebraic numbers of degree $\geq$ 3 is still open. In 1868, Jacobi (see the VI volume of the book Ges. Werke published in 1891, \cite{Jac}) built an algorithm that produces a generalization of the classic continued fractions. These multidimensional continued fractions, if periodic, converge to cubic irrationalities, but the vice versa has been never proved. In 1907, Perron \cite{Per} developed a generalization of this algorithm for any algebraic irrationalities (for a complete survey about the Jacobi--Perron algorithm see \cite{Ber} and \cite{Sc}). \\
\indent The study of periodic representations for algebraic numbers has interested many mathematicians. This is a very beautiful theoretical (and not only) question: can irrational numbers have a periodic representation? During the years many attempts have been performed. Several kinds of multidimensional continued fractions have been deeply studied in various works.\\  
\indent In \cite{Ber2}, \cite{Ber3}, \cite{Ber4}, Bernstein studied and proved the convergence of the Jacobi--Perron algorithm for a vast class of algebraic irrationals. Further results on the Jacobi--Perron algorithm can be found in \cite{Raju} and \cite{Vau}. Many different algorithms are summarized and compared in \cite{Bre} and \cite{Bry}. Moreover, see the beautiful book of Schweiger \cite{Scb} for a guide about multidimensional continued fractions.\\
\indent  The usual approach to the Hermite problem contemplates the research of functions whose iteration on algebraic irrationalities yields a periodical algorithm. In this sense, Tamura and Yasutomi \cite{Ta}, \cite{Ta2} recently presented a modified Jacobi--Perron algorithm. Similarly, the Jacobi--Perron algorithm has been modified using different functions, e.g., in \cite{Hendy}, \cite{Willi}, \cite{Martin}, \cite{Sc2}.\\
\indent A very interesting approach can be found in the works \cite{gar1}, \cite{gar3}, \cite{gar2}, where a multidimensional continued fraction related to triangle sequences is studied. Moreover, in \cite{Be} a generalization of the Minkowski question--mark function is developed. Finally, a completely different approach to multidimensional continued fractions can be found in \cite{Kar}. \\
\indent All these algorithms, when periodic, provide sequences that converge to cubic irrationalities. However, none algorithm has been proved to become periodic when the input is a cubic irrational. Thus, not exists any algorithm that provides a periodic representation for a given cubic irrationality.\\
\indent In this paper, the Hermite problem has been approached finding a periodic representation for an irrational number satisfying the cubic equation $x^3-px^2-qx-r=0$, with $p,q,r$ rational numbers. In other words, the problem of writing cubic irrationals as a periodic sequence of rational or integer numbers has been solved. The Hermite problem can not be claimed completely solved, since the periodic representation does not derive from an algorithm defined over all real numbers. However, the results obtained are important, since for the first time we have found a periodic representation for any cubic irrational. \\
\indent The periodic representation has been directly found by means of elementary techniques that only involve the algebraic properties of cubic irrationalities and the properties of linear recurrent sequences. Moreover, an algorithm, which provides a periodic sequence when a cubic irrational is given in input, is derived.\\
\indent In section 2, the fundamental properties of the multidimensional continued fraction (named ternary continued fraction), derived from the Jacobi algorithm, are presented. In section 3, the main case ($\alpha$ root largest in modulus of $x^3-px^2-qx-r$) is treated. A periodic expansion that converges to the couple $(\cfrac{r}{\alpha},\alpha)$ is shown. In section 4, all the remaining cases of cubic irrationalities satisfying a cubic equation $x^3-px^2-qx-r=0$ are treated. In this way, we can say that a real number $\alpha$ is a cubic irrational $\Leftrightarrow$ $\alpha$ can be represented by means of a periodic ternary continued fraction (that is convergent).\\
\indent The iteration on cubic irrationalities  of the map in section 5 provides (according to the Jacobi algorithm) the periodic expansion found in the previous sections. Section 6 is devoted to the conclusions.

\section{Ternary continued fractions}
The Jacobi algorithm associates a couple of integer sequences to a couple of real numbers by the following equations:
\begin{equation} \label{algobiforcanti} \begin{cases} a_n=[\alpha_n] \cr b_n=[\beta_n] \cr \alpha_{n+1}=\cfrac{1}{\beta_n-[\beta_n]} \cr \beta_{n+1}=\cfrac{\alpha_n-[\alpha_n]}{\beta_n-[\beta_n]}  \end{cases}, \end{equation}
$n=0,1,2,...$, for any couple of real numbers $\alpha=\alpha_0$ and $\beta=\beta_0$. It follows that 
$$ \begin{cases} \alpha_n=a_n+\cfrac{\beta_{n+1}}{\alpha_{n+1}}  \cr \beta_n=b_n+\cfrac{1}{\alpha_{n+1}}  \end{cases} $$
Therefore, the real numbers $\alpha$ and $\beta$ are represented by the sequences $(a_n)_{n=0}^\infty$, $(b_n)_{n=0}^\infty$ as follows:
\begin{equation} \label{fcb} \alpha=a_0+\cfrac{b_1+\cfrac{1}{a_2+\cfrac{b_3+\cfrac{1}{\ddots}}{a_3+\cfrac{\ddots}{\ddots}}}}{a_1+\cfrac{b_2+\cfrac{1}{a_3+\cfrac{\ddots}{\ddots}}}{a_2+\cfrac{b_3+\cfrac{1}{\ddots}}{a_3+\cfrac{\ddots}{\ddots}}}} \quad \text{and} \quad \beta=b_0+\cfrac{1}{a_1+\cfrac{b_2+\cfrac{1}{a_3+\cfrac{\ddots}{\ddots}}}{a_2+\cfrac{b_3+\cfrac{1}{\ddots}}{a_3+\cfrac{\ddots}{\ddots}}}}\end{equation}
We call \emph{ternary} continued fraction (as named, e.g., in \cite{Daus} and \cite{Le}, where these objects have been studied independently from the generating algorithm) such a couple of objects representing the numbers $\alpha$ and $\beta$ and we write 
\begin{equation} \label{fcb2} (\alpha,\beta)=[\{a_0,a_1,a_2,...\},\{b_0,b_1,b_2,...\}], \end{equation}
where $a_i$'s and $b_i$'s are called \emph{partial quotients}.
\begin{remark}
Ternary continued fraction are also called bifurcating continued fraction as in \cite{abcm} and \cite{gupta}.
\end{remark}
Similarly to classical continued fractions, the notion of convergent is introduced as follows (for a complete survey of the Jacobi--Perron algorithm see \cite{Ber}):
$$[\{a_0,a_1,...,a_n\},\{b_0,b_1,...,b_n\}]=(\cfrac{A_n}{C_n},\cfrac{B_n}{C_n}),\quad \forall n\geq0$$ 
$$\lim_{n\rightarrow\infty}\cfrac{A_n}{C_n}=\alpha,\quad \lim_{n\rightarrow\infty}\cfrac{B_n}{C_n}=\beta$$
are the $n$--\emph{th convergents} of the ternary continued fraction \eqref{fcb2}, where $A_n,B_n,C_n$ satisfy the following recurrent relations 
\begin{equation} \label{convergenti} \begin{cases} A_n=a_nA_{n-1}+b_nA_{n-2}+A_{n-3} \cr B_n=a_nB_{n-1}+b_nB_{n-2}+B_{n-3} \cr C_n=a_nC_{n-1}+b_nC_{n-2}+C_{n-3}  \end{cases},\quad \forall n\geq 1 \end{equation} 
with initial conditions
$$\begin{cases}  A_{-2}=1,\quad A_{-1}=0, \quad A_0=a_0 \cr B_{-2}=0,\quad B_{-1}=1, \quad B_0=b_0 \cr C_{-2}=0,\quad C_{-1}=0, \quad C_0=1 \end{cases}$$
Furthermore, a matricial approach is known. Indeed, it is easy to prove by induction that
\begin{equation} \label{matrix-fcb} \begin{pmatrix}  a_0 & 1 & 0 \cr b_0 & 0 & 1 \cr 1 & 0 & 0 \end{pmatrix}... \begin{pmatrix} a_n & 1 & 0 \cr b_n & 0 & 1 \cr 1 & 0 & 0  \end{pmatrix}=\begin{pmatrix} A_n & A_{n-1} & A_{n-2} \cr B_n & B_{n-1} & B_{n-2} \cr C_n & C_{n-1} & C_{n-2}  \end{pmatrix}\end{equation} 
\[ \Updownarrow \] \[(\cfrac{A_n}{C_n},\cfrac{B_n}{C_n})=[\{a_0,...,a_n\},\{b_0,...,b_n\}], \]
for $n=0,1,2,...$.
\begin{remark}
A ternary continued fraction \eqref{fcb} can converge to a couple of real numbers although the partial quotients are not obtained by the Jacobi algorithm. Thus, it is possible to study convergence of ternary continued fractions independently from the origin of the partial quotients. In sections 3 and 4, we find the partial quotients $a_i$'s and $b_i$'s such that the ternary continued fraction \eqref{fcb} converges to a given cubic irrational. In Section 5, these partial quotients are obtained by the Jacobi algorithm \eqref{algobiforcanti} by means of two functions $f_z^\alpha,g_z^\alpha$ that substitute the role of the floor function.
\end{remark}
In \cite{abcm}, the authors studied the convergence of ternary continued fractions with rational partial quotients, finding infinitely many periodic representations for every cubic root.
\begin{theorem} \label{fcb-teo1} The periodic ternary continued fraction
\begin{equation} \label {fcbredei} [\{z,\cfrac{2z}{d},\overline{\cfrac{3dz}{z^3+d^2},3z,\cfrac{3z}{d}}\},\{0,-\cfrac{z^2}{d},\overline{-\cfrac{3z^2}{z^3+d^2},-\cfrac{3dz^2}{z^3+d^2},-\cfrac{3z^2}{d}}\}]  \end{equation}
converges for every integer $z\not=0$ to the couple of irrationals $(\sqrt[3]{d^2},\sqrt[3]{d})$, for $d$ integer not cube.
\end{theorem}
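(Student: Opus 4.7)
The plan is to reduce the convergence to a dominant--eigenvalue argument through the matricial formulation \eqref{matrix-fcb}. Let
$$M := \begin{pmatrix}\tfrac{3dz}{z^3+d^2}&1&0\\-\tfrac{3z^2}{z^3+d^2}&0&1\\1&0&0\end{pmatrix}\begin{pmatrix}3z&1&0\\-\tfrac{3dz^2}{z^3+d^2}&0&1\\1&0&0\end{pmatrix}\begin{pmatrix}\tfrac{3z}{d}&1&0\\-\tfrac{3z^2}{d}&0&1\\1&0&0\end{pmatrix}$$
be the rational $3\times 3$ matrix associated with one full period of \eqref{fcbredei}, and let $P$ be the product of the two matrices associated with the pre--period $(z,0),(2z/d,-z^2/d)$. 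By \eqref{matrix-fcb} the matrix of convergents at index $n=3k+1$ equals $PM^{k}$, whose first column is the triple $(A_{3k+1},B_{3k+1},C_{3k+1})^{T}$; the other two residue classes modulo $3$ are recovered by right--multiplying by the first one or two factors of $M$.

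The key algebraic step is to verify that $(\sqrt[3]{d^{2}},\sqrt[3]{d},1)^{T}$ lies in the image under $P$ of a right eigenvector of $M$ with a real eigenvalue $\lambda_{0}\in\mathbb{Q}(\sqrt[3]{d})$. Writing $\delta:=\sqrt[3]{d}$, $\omega=e^{2\pi i/3}$, and $w_{j}:=(\omega^{2j}\delta^{2},\omega^{j}\delta,1)^{T}$ for $j=0,1,2$, I would check directly that the vectors $v_{j}:=P^{-1}w_{j}$ satisfy $Mv_{j}=\lambda_{j}v_{j}$ for explicit scalars $\lambda_{j}\in\mathbb{Q}(\omega^{j}\delta)$, using only the cubic relation $\delta^{3}=d$. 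Since $M$ has rational entries, the Galois action forces $\{v_{0},v_{1},v_{2}\}$ to be a full eigenbasis with $\lambda_{1}=\overline{\lambda_{2}}$; algebraically, this encodes the statement that the pair $(\sqrt[3]{d^{2}},\sqrt[3]{d})$ is the fixed pair of the Jacobi map of the periodic tail, transported backwards through the pre--period by $P$.

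With the eigendecomposition in hand, expanding $e_{1}=c_{0}v_{0}+c_{1}v_{1}+c_{2}v_{2}$ gives $PM^{k}e_{1}=\sum_{j}c_{j}\lambda_{j}^{k}\,w_{j}$; provided $|\lambda_{0}|>|\lambda_{1}|=|\lambda_{2}|$ and $c_{0}\neq 0$, the dominant term yields $A_{3k+1}/C_{3k+1}\to\delta^{2}$ and $B_{3k+1}/C_{3k+1}\to\delta$, and the two remaining subsequences are handled in the same way. The principal obstacle is proving the dominance inequality $|\lambda_{0}|>|\lambda_{1}|$: since $\lambda_{0}\lambda_{1}\lambda_{2}=\det M\in\mathbb{Q}$ and $|\lambda_{1}|=|\lambda_{2}|$, this reduces to the explicit algebraic inequality $\lambda_{0}^{3}>|\det M|$ in the parameters $z,d$, which must be verified uniformly for every integer $z\neq 0$ and every non--cube integer $d$. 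The secondary non--vanishing condition $c_{0}\neq 0$ is a transversality requirement that follows from the explicit form of $P$ together with the base vector $e_{1}$, and can be checked by computing the determinant $\det[v_{0}\;v_{1}\;v_{2}]$ in the Galois eigenbasis.
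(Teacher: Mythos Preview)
The paper does not prove Theorem~\ref{fcb-teo1} in the body; it is quoted from \cite{abcm}, and the surrounding text only indicates the method: expand $(z+\sqrt[3]{d^{2}})^{n}$ in the basis $\{1,\sqrt[3]{d},\sqrt[3]{d^{2}}\}$ to obtain the polynomials $\nu_{n}^{(i)}$, show $\nu_{n}^{(0)}/\nu_{n}^{(2)}\to\sqrt[3]{d^{2}}$ and $\nu_{n}^{(1)}/\nu_{n}^{(2)}\to\sqrt[3]{d}$ via the Binet formula, and then verify by induction that the convergents of the stated continued fraction coincide with these ratios. This is exactly the specialization $p=q=0$, $r=d$ of the argument written out in full for Theorem~\ref{main}. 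Your spectral approach is the same idea in different clothing: the Binet decomposition of the $\nu_{n}^{(i)}$ \emph{is} the eigendecomposition of the transfer map, and the paper's matrix $N$ in \eqref{mn} plays the role of (a simpler conjugate of) your period matrix $M$, with eigenvalues $z+\alpha_{i}^{2}$. The paper's organization has the practical advantage that $N$ is much simpler than the triple product $M$, so the eigenvalues are read off immediately rather than extracted from a product of three $3\times3$ matrices.

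That said, your proposal has a genuine gap. You correctly identify the dominance inequality $|\lambda_{0}|>|\lambda_{1}|$ as ``the principal obstacle'' and then leave it unproved. This is not a formality but the entire analytic content of the statement; without it you have only shown that $(\sqrt[3]{d^{2}},\sqrt[3]{d},1)^{T}$ is \emph{a} fixed direction of the periodic tail, not that the iterates converge to it. In the paper's variables the relevant inequality is $|z+\delta^{2}|>|z+\omega\delta^{2}|$ with $\delta=\sqrt[3]{d}$, and a one--line computation gives $|z+\delta^{2}|^{2}-|z+\omega\delta^{2}|^{2}=3z\delta^{2}$, so dominance holds if and only if $z>0$. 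Hence the ``uniform verification for every integer $z\neq0$'' you are seeking cannot succeed as stated: for $z<0$ the two complex eigenvalues have the larger modulus and the convergents oscillate. Compare the explicit hypothesis in Theorem~\ref{binet}; the general Theorem~\ref{main} carries precisely this restriction on $z$, and Theorem~\ref{fcb-teo1} as quoted should carry it as well. Two smaller points: your reduction should read $|\lambda_{0}|^{3}>|\det M|$ rather than $\lambda_{0}^{3}>|\det M|$, since nothing yet guarantees $\lambda_{0}>0$; and the Galois argument only shows the $v_{j}$ are permuted by the Galois group, not that they are linearly independent---for that you still need the three eigenvalues to be distinct, which is again part of the computation you have deferred.
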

\begin{remark}
A ternary continued fraction with rational partial quotients is clearly determined by sequences of integer numbers. Indeed, given
$$[\{\cfrac{a_0}{b_0},\cfrac{a_1}{b_1},...\},\{\cfrac{c_0}{d_0},\cfrac{c_1}{d_1},...\}],$$
where $a_i, b_i, c_i, d_i$ integer numbers, for $i=0,1,2,...$, then
\begin{equation} \label{rm} \begin{pmatrix} a_0d_0 & b_0d_0 & 0 \cr c_0b_0 & 0 & b_0d_0 \cr b_0d_0 & 0 & 0 \end{pmatrix}\cdots \begin{pmatrix} a_nd_n & b_nd_n & 0 \cr c_nb_n & 0 & b_nd_n \cr b_nd_n & 0 & 0 \end{pmatrix}=  \end{equation} 
$$\begin{pmatrix}  d_0s_n & d_0b_nd_n s_{n-1} & d_0b_nd_nb_{n-1}d_{n-1} s_{n-2} \cr b_0d_1s'_n & b_0b_nd_ns'_{n-1} & b_0b_nd_nb_{n-1}d_{n-1}s'_{n-2} \cr b_0d_0d_1s''_n & b_0d_0d_1b_nd_ns''_{n-1} & b_0d_0d_1b_nd_nb_{n-1}d_{n-1}s''_{n-2} \end{pmatrix} $$
$$ \Updownarrow $$
$$ [\{\cfrac{a_0}{b_0},...,\cfrac{a_n}{b_n}\},\{\cfrac{c_0}{d_0},...,\cfrac{c_n}{d_n}\}]=(\cfrac{s_n}{b_0d_1s''_n},\cfrac{s'_n}{d_0s''_n})=(\cfrac{A_n}{C_n},\cfrac{B_n}{C_n}), $$
where  
$$\begin{cases} s_0=a_0,\quad s_1=a_0a_1d_1+b_0b_1c_1,\quad s_2=a_2d_2s_{1}+b_2b_{1}c_2d_{1}s_{0}+b_2b_{1}b_{0}d_2d_{1} \cr s'_0=c_0,\quad s'_1=a_1c_0+b_1d_0, \quad s'_2=b_1b_2c_0c_2+a_1a_2c_0d_2+a_2b_1d_0d_2  \cr s''_0=1,\quad s''_1=a_1,\quad s''_2=b_1b_2c_2+a_1a_2d_2 \end{cases}$$
and
$$\begin{cases} s_n=a_nd_ns_{n-1}+b_nb_{n-1}c_nd_{n-1}s_{n-2}+b_nb_{n-1}b_{n-2}d_nd_{n-1}d_{n-2}s_{n-3} \cr s'_n=a_nd_ns'_{n-1}+b_nb_{n-1}c_nd_{n-1}s'_{n-2}+b_nb_{n-1}b_{n-2}d_nd_{n-1}d_{n-2}s'_{n-3} \cr s''_n=a_nd_ns''_{n-1}+b_nb_{n-1}c_nd_{n-1}s''_{n-2}+b_nb_{n-1}b_{n-2}d_nd_{n-1}d_{n-2}s''_{n-3} \end{cases},\quad \forall n\geq 3.$$
For these results see \cite{abcm}. Thus, a ternary continued fraction with rational partial quotients can be represented by matrices with integer entries like
$$\begin{pmatrix} a_id_i & b_id_i & 0 \cr c_ib_i & 0 & b_id_i \cr b_id_i & 0 & 0 \end{pmatrix},$$
which play the same role of the matrices used in \eqref{matrix-fcb}.
\end{remark}
The periodic expansion of Theorem \ref{fcb-teo1} has been found starting from the development of
\begin{equation} \label{nu} (z+\sqrt[3]{d^2})^n=\nu_n^{(0)}+\nu_n^{(1)}\sqrt[3]{d}+\nu^{(2)}_n\sqrt[3]{d^2}, \end{equation}
for every integer $z\not=0$, $d$ integer not cube, and where $\nu_n^{(0)}, \nu_n^{(1)}, \nu^{(2)}_n$ are polynomials such that 
$$\lim_{n\rightarrow}\cfrac{\nu_n^{(0)}}{\nu_n^{(2)}}=\sqrt[3]{d^2},\quad \lim_{n\rightarrow}\cfrac{\nu_n^{(1)}}{\nu_n^{(2)}}=\sqrt[3]{d}$$
These ratios generalize the R\'edei rational functions \cite{Redei}. Indeed R\'edei rational functions arise from the development of
$$(z+\sqrt{d})^n=N_n(d,z)+D_n(d,z)\sqrt{d},$$
for every integer $z\not=0$, $d$ integer not square, and where
$$ N_n(d,z)=\sum_{k=0}^{[n/2]}\binom{n}{2k}d^kz^{n-2k},\quad D_n(d,z)=\sum_{k=0}^{[n/2]}\binom{n}{2k+1}d^kz^{n-2k-1}. $$
The R\'edei rational functions are defined as
$$Q_n(d,z)=\cfrac{N_n(d,z)}{D_n(d,z)}, \quad \forall n\geq 1.$$
The R\'edei rational functions are very interesting and useful tools in number theory. Indeed they are permutation functions of finite fields (see, e.g., \cite{Lidl}) and they can be also used in order to generate pseudorandom sequences \cite{pseudo} or to construct a public key cryptographic system \cite{crit}. Moreover in \cite{bcm} and \cite{abcm2}, R\'edei rational functions are connected to periodic continued fractions with rational partial quotients convergent to square roots. In \cite{abcm}, R\'edei rational functions have been generalized in order to obtain periodic representations only for cubic roots. In the next section, we will propose a different generalization of the R\'edei rational functions in order to construct periodic ternary continued fractions convergent to any cubic irrationalities. 

\section{The main case}
Let $\alpha$ be a real root of the polynomial $x^3-px^2-qx-r$, with $p,q,r\in\mathbb Q$. Let us consider
\begin{equation} \label{cerruti}(z+\alpha^2)^n=\mu_n^{(0)}+\mu_n^{(1)}\alpha+\mu_n^{(2)}\alpha^2, \end{equation}
for $z$ integer number not zero and where the polynomials $\mu_n^{(i)}$ depends on $p,q,r,z$ and we will call it \emph{Cerruti polynomials}, since when $\alpha=\sqrt[3]{d}$ they are the polynomials $\nu_n^{(i)}$ \eqref{nu} introduced the first time in \cite{abcm}. Let $N$ be the following fundamental matrix
\begin{equation} \label{mn} N=\begin{pmatrix} z & r & pr \cr 0 & q+z & pq+r \cr 1 & p & p^2+q+z \end{pmatrix}. \end{equation}
Its characteristic polynomial is
$$x^3-\text{Tr}(N)x^2+\cfrac{1}{2}(\text{Tr}(N)^2-\text{Tr}(N^2))x-\det (N),$$
i.e.,
$$x^3-(p^2+2q+3z)x^2+(q^2-2pr+2p^2z+4qz+3z^2)x-(r^2+q^2z-2prz+p^2z^2+2qz^2+z^3).$$
In the following, we set $I_1(N)=\cfrac{1}{2}(\text{Tr}(N)^2-\text{Tr}(N^2))$.
\begin{theorem} \label{zN}
Let $N$ and $\mu_n^{(i)}$ be the matrix \eqref{mn} and the Cerruti polynomials above defined.
\begin{enumerate}
\item The characteristic polynomial of $N$ has roots
$$z+\alpha_1^2,\quad z+\alpha_2^2,\quad z+\alpha_3^2,$$
where $\alpha_1,\alpha_2,\alpha_3$ are the roots of $x^3-px^2-qx-r$.
\item $$N^n=\begin{pmatrix}  \mu_n^{(0)} & r\mu_n^{(2)} & r\mu_n^{(1)}+pr\mu_n^{(2)} \cr \mu_n^{(1)} & \mu_n^{(0)}+q\mu_n^{(2)} & (pq+r)\mu_n^{(2)}+q\mu_n^{(1)} \cr \mu_n^{(2)} & \mu_n^{(1)}+p\mu_n^{(2)} & \mu_n^{(0)}+p\mu_n^{(1)}+(p^2+q)\mu_n^{(2)} \end{pmatrix}$$
\end{enumerate}
\end{theorem}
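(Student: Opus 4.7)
The plan is to recognize $N$ as the matrix, in the basis $\{1,\alpha,\alpha^2\}$, of the $\mathbb{Q}$-linear endomorphism of $\mathbb{Q}[\alpha]\cong\mathbb{Q}[x]/(x^3-px^2-qx-r)$ given by multiplication by $z+\alpha^2$. To verify this I would compute the three products $(z+\alpha^2)\cdot 1$, $(z+\alpha^2)\cdot\alpha$, $(z+\alpha^2)\cdot\alpha^2$, using the defining relation $\alpha^3=p\alpha^2+q\alpha+r$ (so that $\alpha^4=(p^2+q)\alpha^2+(pq+r)\alpha+pr$) to reduce the higher powers. The coordinate vectors that emerge match precisely the three columns of $N$ in \eqref{mn}; this single calculation is the only genuine computation required for the whole theorem.

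Once this identification is in hand, part (1) is essentially a structural observation. The characteristic polynomial of multiplication by an element $\beta\in\mathbb{Q}[\alpha]$ factors over the splitting field of $x^3-px^2-qx-r$ as $\prod_{i=1}^3\bigl(x-\sigma_i(\beta)\bigr)$, where the $\sigma_i$ are the three $\mathbb{Q}$-embeddings sending $\alpha\mapsto\alpha_i$. Applied to $\beta=z+\alpha^2$ this gives the claimed roots $z+\alpha_i^2$. If a concrete check is preferred, one can instead expand the elementary symmetric polynomials of $z+\alpha_i^2$ via Newton's identities, substitute the Vieta relations $\sum\alpha_i=p$, $\sum_{i<j}\alpha_i\alpha_j=-q$, $\alpha_1\alpha_2\alpha_3=r$, and verify that $p^2+2q+3z$, $q^2-2pr+2p^2z+4qz+3z^2$, and $r^2+q^2z-2prz+p^2z^2+2qz^2+z^3$ emerge, matching $\operatorname{Tr}(N)$, $I_1(N)$, and $\det(N)$.

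For part (2), the same identification makes $N^n$ the matrix of multiplication by $(z+\alpha^2)^n=\mu_n^{(0)}+\mu_n^{(1)}\alpha+\mu_n^{(2)}\alpha^2$, which is precisely the definition of the Cerruti polynomials in \eqref{cerruti}. The $k$-th column of $N^n$ is therefore the coordinate vector of $\bigl(\mu_n^{(0)}+\mu_n^{(1)}\alpha+\mu_n^{(2)}\alpha^2\bigr)\cdot\alpha^{k-1}$, reduced modulo $x^3-px^2-qx-r$ by the same two rules used above. The first column is immediate; the second, obtained by multiplying by $\alpha$ and using $\alpha^3=p\alpha^2+q\alpha+r$, yields $(r\mu_n^{(2)},\,\mu_n^{(0)}+q\mu_n^{(2)},\,\mu_n^{(1)}+p\mu_n^{(2)})^T$; the third, obtained by multiplying by $\alpha^2$ and invoking both $\alpha^3$ and $\alpha^4$, yields the remaining column. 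The only real obstacle is bookkeeping in the third column, where the term $\mu_n^{(2)}\alpha^4$ forces a two-step reduction and must be collected carefully; once this is done, the matrix entries displayed in the theorem appear exactly.
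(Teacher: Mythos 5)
Your proof is correct, but it follows a genuinely different route from the paper. You identify $N$ once and for all as the matrix, in the basis $\{1,\alpha,\alpha^2\}$ of $\mathbb{Q}(\alpha)$, of multiplication by $z+\alpha^2$ (the column computations you describe do check out), and then both claims fall out structurally: part (1) because the characteristic polynomial of multiplication by $\beta$ is $\prod_{i}\bigl(x-\sigma_i(\beta)\bigr)$, and part (2) because the regular representation is multiplicative, so $N^n$ is the matrix of multiplication by $(z+\alpha^2)^n=\mu_n^{(0)}+\mu_n^{(1)}\alpha+\mu_n^{(2)}\alpha^2$, whose columns you reduce with $\alpha^3=p\alpha^2+q\alpha+r$ and $\alpha^4=(p^2+q)\alpha^2+(pq+r)\alpha+pr$. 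The paper instead proves (1) by a direct symmetric-function computation from the Vieta relations (which is essentially the concrete alternative you also sketch), and proves (2) by observing that the $\mu_n^{(i)}$ are order-3 linear recurrent sequences with characteristic polynomial equal to that of $N$, so that (implicitly via Cayley--Hamilton) both sides satisfy the same recurrence and it suffices to verify the initial matrices $N^0$, $N^1$, $N^2$ explicitly. Your approach buys a cleaner, induction-free argument that simultaneously explains where the eigenvalues $z+\alpha_i^2$ come from and why the entries of $N^n$ are exactly the Cerruti coordinates; the paper's approach is more elementary and stays within the recurrence-sequence toolkit it uses throughout. One small point worth making explicit in your write-up: the identification of coordinates (and hence the well-definedness of the $\mu_n^{(i)}$ as coefficients in the basis $\{1,\alpha,\alpha^2\}$, and the embedding argument in part (1)) uses that $x^3-px^2-qx-r$ is the minimal polynomial of the cubic irrational $\alpha$, which is the standing assumption of the section.
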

\begin{proof}
\begin{enumerate}
\item Considering that
$$\alpha_1\alpha_2\alpha_3=r,\quad \alpha_1+\alpha_2+\alpha_3=p,\quad \alpha_1\alpha_2+\alpha_2\alpha_3+\alpha_1\alpha_3=-q,$$
we have
$$\alpha_1^2+\alpha_2^2+\alpha_3^2=p^2+2q,\quad \alpha_1^2\alpha_2^2+\alpha_2^2\alpha_3^2+\alpha_1^2\alpha_3^2=q^2-2pr.$$
Moreover, expanding the polynomial $(x-(z+\alpha_1^2))(x-(z+\alpha_2^2))(x-(z+\alpha_3^2))$, it is easy to see that the coefficient of $x^2$, the coefficient of $x$, and the constant term are
$$-(\alpha_1^2+\alpha_2^2+\alpha_3^2+3z)=-\text{Tr}(N)$$
$$\alpha_1^2\alpha_2^2+\alpha_2^2\alpha_3^2+\alpha_1^2\alpha_3^2+2z(\alpha_1^2+\alpha_2^2+\alpha_3^2)+3z^2=I_1(N)$$
$$-\alpha_1^2\alpha_2^2\alpha_3^2-z(\alpha_1^2\alpha_2^2+\alpha_2^2\alpha_3^2+\alpha_1^2\alpha_3^2)-z^2(\alpha_1^2+\alpha_2^2+\alpha_3^2)-z^3=-\det (N),$$
respectively.
\item By definition of Cerruti polynomials \eqref{cerruti}, it follows that $\mu_n^{(i)}$'s, for $i=0,1,2$, are linear recurrent sequences of degree 3 whose characteristic polynomial is the minimal polynomial of $z+\alpha^2$ (where $\alpha$ real root of $x^3-px^2-qx-r$), i.e., the characteristic polynomial of $N$.\\
Thus, we only have to check the initial conditions. We start from
$$(z+\alpha^2)^0=1,$$
i.e.
$$\mu_0^{(0)}=1,\quad \mu_0^{(1)}=0,\quad \mu_0^{(2)}=0$$
and since
$$N^0=\begin{pmatrix} 1 & 0 & 0 \cr 0 & 1 & 0 \cr 0 & 0 & 1 \end{pmatrix}$$
the initial condition for $n=0$ is satisfied. Considering $z+\alpha^2$, it follows that
$$\mu_1^{(0)}=z,\quad \mu_1^{(1)}=0,\quad \mu_1^{(2)}=1.$$
Thus,
$$N=\begin{pmatrix} z & r & pr \cr 0 & q+z & pq+r \cr 1 & p & p^2+q+z \end{pmatrix}=\begin{pmatrix}  \mu_1^{(0)} & r\mu_1^{(2)} & r\mu_1^{(1)}+pr\mu_1^{(2)} \cr \mu_1^{(1)} & \mu_1^{(0)}+q\mu_1^{(2)} & (pq+r)\mu_1^{(2)}+q\mu_1^{(1)} \cr \mu_1^{(2)} & \mu_1^{(1)}+p\mu_1^{(2)} & \mu_1^{(0)}+p\mu_1^{(1)}+(p^2+q)\mu_1^{(2)} \end{pmatrix}.$$
Finally,
$$(z+\alpha^2)^2=z^2+2\alpha^2z+\alpha^4=z^2+pr+(pq+r)\alpha+(p^2+q+2z)\alpha^2$$
and
$$N^2=\begin{pmatrix} pr+z^2 & p^2r+qr+2rz & p^3r+2pqr+r^2+2prz \cr pq+r & p^2q+q^2+pr+2qz+z^2 & p^3q+2pq^2+p^2r+2qr+2pqz+2rz \cr p^2+q+2z & p^3+2pq+r+2pz & p^4+3p^2q+q^2+2pr+2p^2z+2qz+z^2  \end{pmatrix}.$$
\end{enumerate}
\end{proof}
\begin{theorem} \label{binet}
Let $\alpha$ be a real root largest in modulus of $x^3-px^2-qx-r$ and let $\alpha_2,\alpha_3$ be the remaining roots. Let $\mu_n^{(i)}$ be the Cerruti polynomials \eqref{cerruti}, then is
$$\lim_{n\rightarrow\infty}\cfrac{\mu_n^{(0)}}{\mu_n^{(2)}}=\cfrac{r}{\alpha},\quad \lim_{n\rightarrow\infty}\cfrac{\mu_n^{(1)}}{\mu_n^{(2)}}=\alpha-p,$$
for any integer $z$ such that $z+\alpha^2$ larger in modulus than $z+\alpha_2^2, z+\alpha_3^2$ and $\mu_n^{(2)}\not=0$.
\end{theorem}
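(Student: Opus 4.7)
The plan is to exploit the Galois symmetry of the defining identity \eqref{cerruti}. The equation $(z+\alpha^2)^n=\mu_n^{(0)}+\mu_n^{(1)}\alpha+\mu_n^{(2)}\alpha^2$ is purely algebraic in $\alpha$ (the coefficients depend only on $p,q,r,z$), so it continues to hold after replacing $\alpha$ by either of the two remaining roots $\alpha_2,\alpha_3$. This gives the $3\times 3$ linear system
\begin{equation*}
\begin{pmatrix} 1 & \alpha & \alpha^2 \\ 1 & \alpha_2 & \alpha_2^2 \\ 1 & \alpha_3 & \alpha_3^2 \end{pmatrix}\begin{pmatrix}\mu_n^{(0)}\\ \mu_n^{(1)}\\ \mu_n^{(2)}\end{pmatrix}=\begin{pmatrix}(z+\alpha^2)^n\\ (z+\alpha_2^2)^n\\ (z+\alpha_3^2)^n\end{pmatrix},
\end{equation*}
whose coefficient matrix is Vandermonde, hence invertible when $\alpha,\alpha_2,\alpha_3$ are distinct (which is automatic as soon as $z+\alpha^2$ strictly dominates the other two in modulus).

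Next I would apply Cramer's rule. Each $\mu_n^{(i)}$ becomes a linear combination $K_i^{(1)}(z+\alpha^2)^n+K_i^{(2)}(z+\alpha_2^2)^n+K_i^{(3)}(z+\alpha_3^2)^n$, where the coefficients $K_i^{(j)}$ are the relevant cofactors divided by the Vandermonde determinant $(\alpha_2-\alpha)(\alpha_3-\alpha)(\alpha_3-\alpha_2)$. Under the hypothesis $|z+\alpha^2|>|z+\alpha_j^2|$ for $j=2,3$, as $n\to\infty$ only the $(z+\alpha^2)^n$ contribution survives in each ratio. The dominant coefficients are exactly the cofactors along the first row of the Vandermonde matrix; after cancelling the common factor, one obtains
\begin{equation*}
\lim_{n\to\infty}\frac{\mu_n^{(0)}}{\mu_n^{(2)}}=\alpha_2\alpha_3,\qquad \lim_{n\to\infty}\frac{\mu_n^{(1)}}{\mu_n^{(2)}}=-(\alpha_2+\alpha_3).
\end{equation*}

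Finally I would convert these to the stated form via Vi\`ete's relations: $\alpha\alpha_2\alpha_3=r$ gives $\alpha_2\alpha_3=r/\alpha$, and $\alpha+\alpha_2+\alpha_3=p$ gives $-(\alpha_2+\alpha_3)=\alpha-p$. The assumption $\mu_n^{(2)}\neq 0$ is needed only to make the ratios defined for every $n$; since $K_2^{(1)}$ is a nonzero multiple of $(\alpha_3-\alpha_2)$, eventual nonvanishing is automatic in the non-degenerate case.

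The main subtlety I expect is the degenerate situation where two of the numbers $z+\alpha_j^2$ coincide in modulus (for instance when $\alpha_2,\alpha_3$ are a complex conjugate pair giving $|z+\alpha_2^2|=|z+\alpha_3^2|$); the hypothesis of the theorem is precisely what rules this out by demanding strict dominance of $|z+\alpha^2|$, so the asymptotic argument goes through cleanly. The only other point that needs a line of justification is that the recurrence and initial conditions from Theorem \ref{zN} really force the Binet decomposition in the variables $(z+\alpha_j^2)^n$; this follows from the first part of Theorem \ref{zN}, which identifies the characteristic polynomial of the recurrence with the minimal polynomial of $z+\alpha^2$ over $\mathbb Q$.
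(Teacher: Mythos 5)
Your argument is correct and its asymptotic core is the same as the paper's: $\mu_n^{(i)}$ is a fixed linear combination of the powers $(z+\alpha^2)^n,(z+\alpha_2^2)^n,(z+\alpha_3^2)^n$, and the strictly dominant root determines the limits of the ratios. Where you differ is in how the leading coefficients are identified. The paper first uses Theorem \ref{zN} to see that the $\mu_n^{(i)}$ satisfy the order--$3$ recurrence whose characteristic polynomial is that of $N$, writes the Binet form, solves the $3\times 3$ initial--condition systems to get $a_1,b_1,c_1$ explicitly, and then needs the identity $(\alpha^2-q)(\alpha-p)=pq+r$ to turn $\tfrac{pq+r}{\alpha^2-q}$ into $\alpha-p$. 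You instead substitute the conjugates $\alpha_2,\alpha_3$ into \eqref{cerruti} (legitimate because the congruence $(z+x^2)^n\equiv\mu_n^{(0)}+\mu_n^{(1)}x+\mu_n^{(2)}x^2\pmod{x^3-px^2-qx-r}$ holds with rational $\mu_n^{(i)}$; this uses that $1,\alpha,\alpha^2$ are $\mathbb{Q}$--independent, i.e.\ that $\alpha$ is a genuine cubic irrational, the same implicit hypothesis as in the paper) and read the coefficients off the inverse Vandermonde, so the limits come out directly as $\alpha_2\alpha_3$ and $-(\alpha_2+\alpha_3)$ and Vi\`ete finishes. This buys a shorter computation and makes the nonvanishing of the leading coefficient of $\mu_n^{(2)}$ transparent, since it equals $1/\bigl((\alpha-\alpha_2)(\alpha-\alpha_3)\bigr)$, whereas the paper leaves the analogous fact ($\alpha^2\neq q$, equivalently $pq+r\neq 0$) implicit. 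Two small corrections to your closing remarks: strict dominance of $|z+\alpha^2|$ does not, and need not, exclude $|z+\alpha_2^2|=|z+\alpha_3^2|$ --- that equality is in fact the typical situation when $\alpha_2,\alpha_3$ are complex conjugates and is harmless, since only the gap between the dominant modulus and the other two matters; and dominance forces $\alpha\neq\alpha_2,\alpha_3$ but not $\alpha_2\neq\alpha_3$, so the distinctness of all three roots (hence invertibility of the Vandermonde and the absence of $n\beta^n$ terms in the decomposition) should be credited to the irreducibility of the cubic rather than to the modulus hypothesis.
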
 
\begin{proof}
Let  
$$\beta_1=z+\alpha^2,\quad \beta_2=z+\alpha_2^2,\quad \beta_3=z+\alpha_3^2$$
be the roots of the characteristic polynomial of $N$. By the Binet formula
$$\begin{cases} \mu_n^{(0)}=a_1\beta_1^n+a_2\beta_2^n+a_3\beta_3^n \cr \mu_n^{(1)}=b_1\beta_1^n+b_2\beta_2^n+b_3\beta_3^n \cr \mu_n^{(2)}=c_1\beta_1^n+c_2\beta_2^n+c_3\beta_3^n \end{cases},\quad \forall n\geq 0$$
where the coefficients $a_i,b_i,c_i$ can be obtained by initial conditions, solving the system
$$\begin{cases} a_1+a_2+a_3=1 \cr a_1\beta_1+a_2\beta_2+a_3\beta_3=z,\cr a_1\beta_1^2+a_2\beta_2^2+a_3\beta_3^2=pr+z^2 \end{cases}$$
and similar systems for the $b_i$'s and $c_i$'s.
Since $\beta_1$ is larger in modulus than $\beta_2,\beta_3$, we are only interested in
$$\begin{cases} a_1= \cfrac{\beta_2\beta_3-z(\beta_2+\beta_3)+pr+z^2}{(\beta_1-\beta_2)(\beta_1-\beta_3)} \cr b_1=\cfrac{pq+r}{(\beta_1-\beta_2)(\beta_1-\beta_3)} \cr c_1=\cfrac{2z+p^2+q-(\beta_2+\beta_3)}{(\beta_1-\beta_2)(\beta_1-\beta_3)} \end{cases}.$$
Now,
$$\lim_{n\rightarrow\infty}\cfrac{\mu_n^{(1)}}{\mu_n^{(2)}}=\cfrac{b_1}{c_1}=\cfrac{pq+r}{2z+p^2+q-(2z+\alpha_2^2+\alpha_3^2)}=\cfrac{pq+r}{\alpha^2-q},$$
moreover
$$(\alpha^2-q)(\alpha-p)=\alpha^3-p\alpha^2-q\alpha+pq=pq+r$$
and it is proved that 
$$\lim_{n\rightarrow\infty}\cfrac{\mu_n^{(1)}}{\mu_n^{(2)}}=\alpha-p.$$
The proof that $\lim_{n\rightarrow\infty}\cfrac{\mu_n^{(0)}}{\mu_n^{(2)}}=\cfrac{r}{\alpha}$ is left to the reader.
\end{proof}
\begin{remark}
It is always possible to find integers $z$ satisfying the condition of Theorem \ref{binet} and such that $\mu_n^{(2)}\not=0$.
\end{remark}
\begin{theorem} \label{main}
Let $\alpha$ be a real root largest in modulus of $x^3-px^2-qx-r$ and $N$ the matrix defined in \eqref{mn}, then
\begin{multline}\label{hermite} [\{z,\cfrac{2z+p^2+q}{pq+r},\overline{\cfrac{(pq+r)\text{Tr}(N)}{\det (N)},\text{Tr}(N),\cfrac{\text{Tr}(N)}{pq+r}}\}, \\ \{p,-\cfrac{z^2+qz+p^2z-pr}{pq+r},\overline{-\cfrac{I_1(N)}{\det (N)},-\cfrac{(pq+r)I_1(N)}{\det(N)},-\cfrac{I_1(N)}{pq+r}}\}] = 
(\cfrac{r}{\alpha},\alpha)  \end{multline}
for any integer $z$ satisfying the hypothesis of Theorem \ref{binet}.
\end{theorem}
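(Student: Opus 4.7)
The plan is to reduce the convergence statement to Theorem \ref{binet} by means of the matrix representation of ternary continued fractions. First I would apply the formalism of the Remark following Theorem \ref{fcb-teo1} (equation \eqref{rm}), writing each pair of rational partial quotients $(a_i,b_i)$ as a $3\times 3$ integer matrix; the convergents $A_n/C_n$ and $B_n/C_n$ are then ratios of entries in the first column of the running product. Because only these ratios matter, each factor in the product may be freely rescaled by a nonzero rational. Multiplying the two pre-period matrices yields a fixed matrix $P_0$, and multiplying the three period matrices yields a fixed matrix $P$, so that after $k$ complete periods the running product is a scalar multiple of $P_0 P^k$.

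The central step is then to verify the matrix identity
\[
P = \lambda\, N
\]
for some nonzero scalar $\lambda\in\mathbb{Q}(p,q,r,z)$, where $N$ is the matrix of \eqref{mn}. Once this is in hand, the running product is proportional to $P_0 N^k$, and by part~2 of Theorem \ref{zN} the first column of $N^k$ is $(\mu_k^{(0)},\mu_k^{(1)},\mu_k^{(2)})^{T}$. A direct computation with $P_0$ then expresses $A_{3k+1}/C_{3k+1}$ and $B_{3k+1}/C_{3k+1}$ as explicit rational combinations of the two ratios $\mu_k^{(0)}/\mu_k^{(2)}$ and $\mu_k^{(1)}/\mu_k^{(2)}$. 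Theorem \ref{binet} gives the limits $r/\alpha$ and $\alpha-p$ for these ratios, from which the required limits $r/\alpha$ in the first coordinate and $p+(\alpha-p)=\alpha$ in the second drop out, the second equality reflecting the value $b_0=p$ of the zeroth partial quotient on the $\beta$-side.

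Convergence of the full sequence of convergents, and not only of the period-end subsequence indexed by $n=3k+1$, follows by multiplying $P_0 P^k$ on the right by one or two additional period matrices and observing that the resulting rational combinations of the $\mu_k^{(i)}$'s tend to the same limits as $k\to\infty$.

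The main obstacle is the matrix identity $P=\lambda N$. The three period matrices have entries built from $\mathrm{Tr}(N)$, $I_1(N)$, $\det(N)$ and $pq+r$, so $P$ is a priori a rather intricate $3\times 3$ polynomial matrix in $p,q,r,z$; the required collapse to $\lambda N$ must be forced by the Newton-like identities of part~1 of Theorem \ref{zN} expressing $\mathrm{Tr}(N)$, $I_1(N)$, $\det(N)$ symmetrically in the $z+\alpha_i^2$, together with the factorisation $(\alpha^2-q)(\alpha-p)=pq+r$ that already played the decisive role in the proof of Theorem \ref{binet}. A secondary concern is to check that $\lambda$, $\det(N)$, $pq+r$ and $\mu_k^{(2)}$ are all nonzero under the genericity assumptions on $z$ inherited from Theorem \ref{binet} and its subsequent Remark; this should not require new ideas beyond those already present there.
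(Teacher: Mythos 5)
Your reduction to Theorem \ref{binet} via the matrix formalism is reasonable in spirit, but the central step as you state it is false: the period product $P=M_2M_3M_4$ is not a scalar multiple of $N$. Each partial-quotient matrix $\begin{pmatrix} a_i & 1 & 0 \cr b_i & 0 & 1 \cr 1 & 0 & 0 \end{pmatrix}$ from \eqref{matrix-fcb} has determinant $1$, so $\det P=1$; since $P$ and $N$ have rational entries, $P=\lambda N$ would force $\lambda\in\mathbb{Q}$ and $\lambda^{3}\det(N)=1$, i.e.\ $\det(N)$ a rational cube, which fails in general (in the paper's first example, $p=5$, $q=-1$, $r=3$, $z=5$ gives $\det(N)=564$), and rescaling the factors as in \eqref{rm} changes $P$ only by an overall scalar, so it cannot restore proportionality. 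The conceptual reason is that one period of length three advances the index of the Cerruti polynomials by \emph{three}, not by one: what is actually true --- and is exactly what the paper proves --- is the column identity that the first column of $M_0M_1\cdots M_n$ equals $\frac{1}{(pq+r)^{k}\det(N)^{[(n+1)/3]}}\bigl(\mu_{n+1}^{(0)},\ \mu_{n+1}^{(1)}+p\,\mu_{n+1}^{(2)},\ \mu_{n+1}^{(2)}\bigr)^{T}$, from which one sees that $P$ is (generically) conjugate to $N^{3}/\det(N)$; its eigenvalue ratios are $\bigl((z+\alpha_i^{2})/(z+\alpha_j^{2})\bigr)^{3}$ rather than $(z+\alpha_i^{2})/(z+\alpha_j^{2})$, so no choice of scalar makes $P$ proportional to $N$. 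Since everything downstream in your argument (reading $A_{3k+1},B_{3k+1},C_{3k+1}$ off $P_0N^{k}$ and then invoking Theorem \ref{binet}) rests on $P=\lambda N$, the proof as written does not go through.

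The paper establishes the column identity directly by induction on $n$, split according to $n\bmod 3$: the recurrences \eqref{convergenti}, with the periodic partial quotients of \eqref{hermite} as coefficients, are combined with the fact that the $\mu_n^{(i)}$ satisfy the order-three recurrence whose characteristic polynomial is $x^{3}-\mathrm{Tr}(N)x^{2}+I_1(N)x-\det(N)$ (Theorem \ref{zN}), yielding $A_n=\mu_{n+1}^{(0)}/\bigl((pq+r)^{k}\det(N)^{[(n+1)/3]}\bigr)$ and the analogous formulas for $C_n$ and $B_n=(\mu_{n+1}^{(1)}+p\,\mu_{n+1}^{(2)})/\bigl((pq+r)^{k}\det(N)^{[(n+1)/3]}\bigr)$; the limits then follow from Theorem \ref{binet} exactly as in your final step, including the shift coming from $b_0=p$. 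If you want to keep the matrix packaging, the key lemma must be this first-column statement (equivalently, $M_0\cdots M_n\,e_1$ is proportional to $T\,N^{n+1}e_1$, where $T$ is the unipotent matrix adding $p$ times the third row to the second), proved by essentially the same mod-$3$ induction; it cannot be collapsed to a proportionality between the period matrix and $N$, and any corrected version of your argument must track the extra factors $(pq+r)^{\pm1}$ and $\det(N)$ that this induction makes explicit.
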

\begin{proof}
By Theorem \ref{binet}, it is sufficient to prove that 
$$\cfrac{A_n}{C_n}=\cfrac{\mu_{n+1}^{(0)}}{\mu_{n+1}^{(2)}},\quad \cfrac{B_n}{C_n}=\cfrac{\mu_{n+1}^{(1)}}{\mu_{n+1}^{(2)}}+p,\quad \forall n\geq0,$$
where $\cfrac{A_n}{C_n},\cfrac{B_n}{C_n}$ are the convergents of the ternary continued fraction \eqref{hermite} satisfying Eqs. \eqref{convergenti}. First of all we prove by induction that 
$$A_n=\cfrac{\mu_{n+1}^{(0)}}{(pq+r)^k\det(N)^{[\frac{n+1}{3}]}},\quad \forall n\geq0,$$
where
$$k=\begin{cases} 1 \quad n\equiv 1\pmod 3 \cr 0\quad \text{otherwise}  \end{cases}$$
The inductive basis is straightforward to prove, indeed
$$A_0=z, \quad A_1=\cfrac{pr+z^2}{pq+r},\quad A_2=\cfrac{p^3r+2pqr+r^2+3prz+z^3}{r^2+q^2z-2prz+p^2z^2+2qz^2+z^3}.$$
Now, for $n\geq3$, we consider the cases
$$n\equiv0 \pmod 3,\quad n\equiv 1 \pmod 3,\quad n\equiv 2 \pmod 3.$$
Let us consider $n\equiv0 \pmod 3$, then
$$A_n=\text{Tr}(N)A_{n-1}-\cfrac{(pq+r)I_1(N)}{\det(N)}A_{n-2}+A_{n-3},$$
by inductive hypothesis we have
$$A_n=\text{Tr}\cfrac{\mu_n^{(0)}}{\det(N)^{[\frac{n}{3}]}}-\cfrac{(pq+r)I_1(N)}{\det(N)}\cdot\cfrac{\mu_{n-1}^{(0)}}{(pq+r)\det(N)^{[\frac{n-1}{3}]}}+\cfrac{\mu_{n-2}^{(0)}}{\det(N)^{[\frac{n-2}{3}]}}.$$
Since $n\equiv0 \pmod 3$, we have
$$[\cfrac{n}{3}]=[\cfrac{n+1}{3}],\quad [\cfrac{n-1}{3}]=[\cfrac{n-2}{3}]=[\cfrac{n+1}{3}]-1.$$
Using the recurrence relation for the Cerruti polynomials, we obtain
$$A_n=\cfrac{\text{Tr}(N)\mu_n^{(0)}-I_1(N)\mu_{n-1}^{(0)}+\det(N)\mu_{n-2}^{(0)}}{\det(N)^{[\frac{n+1}{3}]}}=\cfrac{\mu_{n+1}^{(0)}}{(pq+r)^k\det(N)^{[\frac{n+1}{3}]}}.$$
Let us consider $n\equiv1 \pmod 3$, then
$$A_n=\cfrac{\text{Tr(N)}}{pq+r}A_{n-1}-\cfrac{I_1(N)}{pq+r}A_{n-2}+A_{n-3}$$
and
$$[\frac{n+1}{3}]=[\cfrac{n}{3}]=[\frac{n-1}{3}]=[\frac{n-2}{3}]+1.$$
Thus, we easily obtain
$$A_n=\cfrac{\text{Tr}(N)\mu_{n}^{(0)}-I_1(N)\mu_{n-1}^{(0)}+\det(N)\mu_{n-2}^{(0)}}{(pq+r)\det(N)^{[\frac{n+1}{3}]}}=\cfrac{\mu_{n+1}^{(0)}}{(pq+r)^k\det(N)^{[\frac{n+1}{3}]}}.$$
Similarly when $n\equiv2 \pmod 3$.
In a similar way, it is possible to prove that
$$C_n=\cfrac{\mu_{n+1}^{(2)}}{(pq+r)^k\det(N)^{[\frac{n+1}{3}]}},\quad \forall n\geq0.$$
Consequently,
$$\lim_{n\rightarrow\infty}\cfrac{A_n}{C_n}=\lim_{n\rightarrow\infty}\cfrac{\mu_{n+1}^{(0)}}{\mu_{n+1}^{(2)}}=\cfrac{r}{\alpha}.$$
Finally, let us consider the sequence $(B_n)_{n=0}^\infty$. We prove by induction that
$$B_n=\cfrac{\mu_{n+1}^{(1)}+p\mu_{n+1}^{(2)}}{(pq+r)^k\det(N)^{[\frac{n+1}{3}]}},\quad \forall n\geq0.$$
The steps $n=0,1,2$ can be directly checked. Let us consider $n\geq3$ and $n\equiv 2\pmod 3$ (similarly the formula can be proved when $n\equiv 0\pmod 3$ and $n\equiv 1\pmod 3$). We have
$$B_n=\cfrac{(pq+r)\text{Tr}(N)}{\det(N)}B_{n-1}-\cfrac{I_1(N)}{\det(N)}B_{n-2}+B_{n-3}=$$
$$=\cfrac{(pq+r)\text{Tr}(N)}{\det(N)}\cdot\cfrac{\mu_n^{(1)}+p\mu_n^{(2)}}{(pq+r)\det(N)^{\frac{n}{3}}}-\cfrac{I_1(N)}{\det(N)}\cdot\cfrac{\mu_{n-1}^{(1)}+p\mu_{n-1}^{(2)}}{(pq+r)\det(N)^{\frac{n-1}{3}}}+\cfrac{\mu_{n-2}^{(1)}+p\mu_{n-2}^{(2)}}{(pq+r)\det(N)^{\frac{n}{3}}}.$$
Since
$$[\cfrac{n+1}{3}]=[\cfrac{n}{3}]+1=[\cfrac{n-1}{3}]+1=[\cfrac{n-2}{3}]+1$$
the formula is proved. Thus,
$$\lim_{n\rightarrow\infty}\cfrac{B_n}{C_n}=\lim_{n\rightarrow\infty}\cfrac{\mu_{n+1}^{(1)}}{\mu_{n+1}^{(2)}}+p=\alpha.$$
\end{proof}
The previous theorem provides a periodic representation for all the cubic irrationalities $\alpha$, such that $\alpha$ is the root greatest in modulus of its minimal polynomial. This representation is a ternary continued fraction \eqref{fcb} of period 3 whose partial quotients are given by \eqref{hermite}. Clearly, this expansion leads to periodic sequences of integer numbers that represent cubic irrationals. This fact is highlighted by using the matricial approach. In particular, the irrationalities $(\cfrac{r}{\alpha},\alpha)$ are represented by a periodic product of matrices \eqref{matrix-fcb} whose entries are rational numbers, or equivalently by a periodic product of matrices \eqref{rm} whose entries are integer numbers. 
\begin{example}
Let us consider the cubic polynomial $x^3-5x^2+x-3$, having a real root
$$\alpha=\cfrac{1}{3}(5+\sqrt[3]{44}+\sqrt[3]{242})\simeq 4.9207,$$
greater in modulus than the complex roots. Theorem \ref{main} provides periodic representations of $\alpha$. If we choose, e.g., $z=5$ (which satisfies conditions of Theorem \ref{binet}), we have
$$N=\begin{pmatrix} 5 & 3 & 15 \cr 0 & 4 & -2 \cr 1 & 5 & 29 \end{pmatrix}$$
and
$$(\cfrac{3}{\alpha},\alpha)=[\{5,-17,\overline{-\frac{19}{141},38,-19}\},\{5,65,\overline{-\frac{23}{47},-\frac{46}{47},138}\}].$$
Moreover, we can use \eqref{rm} in order to express the cubic irrationality $\alpha$ as a periodic product of matrices with integer entries 
$$\begin{pmatrix} 5 & 1 & 0 \cr 5 & 0 & 1 \cr 1 & 0 & 0 \end{pmatrix}\begin{pmatrix} -17 & 1 & 0 \cr 65 & 0 & 1 \cr 1 & 0 & 0 \end{pmatrix}\overline{\begin{pmatrix} -893 & 6627 & 0 \cr -3243 & 0 & 6627 \cr 6627 & 0 & 0 \end{pmatrix}\begin{pmatrix} 1786 & 47 & 0 \cr 46 & 0 & 47 \cr 47 & 0 & 0 \end{pmatrix}\begin{pmatrix} -19 & 1 & 0 \cr 138 & 0 & 1 \cr 1 & 0 & 0 \end{pmatrix}}.$$
This periodic representation provides rational approximations for $\alpha$. For example, the convergents of the ternary continued fraction can be evaluated by using \eqref{convergenti} and we obtain rational approximations of $\cfrac{3}{\alpha}$ and $\alpha$, respectively:
$$(5,\cfrac{20}{17},\cfrac{88}{127},\cfrac{4633}{7447},\cfrac{66559}{108838},...)=(5, 1.1764, 0.6929, 0.6221, 0.6115,...)$$
$$(5,\cfrac{84}{17},\cfrac{1251}{254},\cfrac{36651}{7447},\cfrac{535575}{108838})=(5, 4.9412, 4.9252, 4.9216, 4.9208,...)$$
These rational approximations can be obviously obtained by the matricial representation. For example if we set $A$ for the matrix of the pre--period (i.e., the matrix product of the two matrices of the pre--period) and $P$ for the matrix of the period (i.e., the matrix product of the three matrices of the period), then 
$$AP=\begin{pmatrix} -147028831 & 10234297 & 388784 \cr -1183085175 & 80962059 & 2763459 \cr -240423142 & 16450423 & 561086 \end{pmatrix}$$
and
$$\cfrac{-147028831}{-240423142}=\cfrac{66559}{108838},\quad \cfrac{-1183085175}{-240423142}=\cfrac{535575}{108838}.$$ 
\end{example}
\begin{example}
Let us consider the cubic polynomial $3x^3-12x^2-4x+1$. We can apply Theorem \ref{main} with $p=4, q=4/3, r=-1/3$. Using $z=1$, we obtain
$$(-\cfrac{1}{3\alpha},\alpha)=[\{1,\cfrac{58}{15},\overline{\cfrac{975}{218},\cfrac{65}{3},\cfrac{13}{3}}\},\{4,-\cfrac{59}{15},\overline{-\cfrac{403}{218},-\cfrac{2015}{218},-\cfrac{403}{45}}\}].$$
that is a periodic representation of $\alpha$ root greatest in modulus of $3x^3-12x^2-4x+1$.\\
If we choose, e.g., $z=-1$, we obtain a different periodic ternary continued fraction convergent to $\alpha$:
$$(-\cfrac{1}{3\alpha},\alpha)=[\{-1,\cfrac{46}{15},\overline{\cfrac{47}{8},\cfrac{47}{3},\cfrac{47}{15}}\},\{4,3,\overline{-\cfrac{269}{120},\cfrac{269}{24},\cfrac{269}{45}}\}],$$
where $\alpha\simeq 4.29253$. These periodic representations leads to a periodic product of matrices with integer entries convergent to the irrational $\alpha$.
\end{example}

\section{The remaining cases}
When $\alpha$ is the root largest in modulus of a general cubic polynomial $x^3-px^2-qx-r$, a periodic representation for $\alpha$ is provided by Theorem \ref{main}. In this section, we treat all the remaining cases of cubic irrationalities.\\
\indent If $\alpha$ is the root smallest in modulus of $x^3-px^2-qx-r$, Theorem \ref{main} does not work. In this case, we consider the reflected polynomial $x^3+\cfrac{q}{r}x^2+\cfrac{p}{r}x-\cfrac{1}{r}$ whose roots are $\cfrac{1}{\alpha}, \cfrac{1}{\alpha_2}, \cfrac{1}{\alpha_3}$. In this way $\cfrac{1}{\alpha}$ is the root greatest in modulus of $x^3+\cfrac{q}{r}x^2+\cfrac{p}{r}x-\cfrac{1}{r}$ and by Theorem \ref{main} we get a periodic representation for the couple $(\cfrac{\alpha}{r},\cfrac{1}{\alpha})$. Successively, a periodic representation for $\alpha$ can be derived. We need the following
\begin{theorem} \label{r}
Let $[\{a_0,a_1,\overline{a_2,a_3,a_4}\},\{b_0,b_1,\overline{b_2,b_3,b_4}\}]$ be a periodic ternary continued fraction that converges to a couple of real number $(\alpha,\beta)$, then the periodic ternary continued fraction
\begin{equation}  \label{rfcb} [\{ra_0,ra_1,\overline{\frac{a_2}{r^2},ra_3,ra_4}\},\{\frac{b_0}{r},r^2b_1,\overline{\frac{b_2}{r},\frac{b_3}{r},r^2b_4}\}] \end{equation}
converges to the couple of real number $(r\alpha,\cfrac{1}{r\beta})$, for $r$ rational number.
\end{theorem}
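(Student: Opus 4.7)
The cleanest approach is via the matricial representation \eqref{matrix-fcb}. Let
$$M_i = \begin{pmatrix} a_i & 1 & 0 \\ b_i & 0 & 1 \\ 1 & 0 & 0 \end{pmatrix}, \qquad M'_i = \begin{pmatrix} a'_i & 1 & 0 \\ b'_i & 0 & 1 \\ 1 & 0 & 0 \end{pmatrix},$$
where $a'_i, b'_i$ are the rescaled partial quotients prescribed by \eqref{rfcb}. The plan is to show that each $M'_i$ is obtained from $M_i$ by a purely diagonal conjugation with a periodic family of scaling matrices, so that the matrix products telescope and the transformed convergents can be read off directly from those of the original ternary continued fraction.

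Concretely, I would look for diagonal matrices $D_0, D_1, D_2, \dots$ satisfying $D_{i+3} = D_i$ and $M'_i = D_i^{-1} M_i D_{i+1}$ for every $i \geq 0$. The three $1$'s sitting in the off-diagonal positions of $M_i$ force a cyclic shift relation among the diagonal entries of $D_i$, leaving only three parameters to fix; matching the remaining entries against the exponents of $r$ appearing in \eqref{rfcb} pins down the explicit choice $D_0 = \operatorname{diag}(1, r^2, r)$, and then $D_1 = \operatorname{diag}(r, 1, r^2)$, $D_2 = \operatorname{diag}(r^2, r, 1)$. The pre-period of length $2$ and the period of length $3$ are accommodated by the same family because the rescalings in \eqref{rfcb} were designed to be purely periodic modulo $3$ starting from index $0$.

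Once the local identity $M'_i = D_i^{-1} M_i D_{i+1}$ is verified, the inner factors cancel in pairs and the product telescopes:
$$M'_0 M'_1 \cdots M'_n \;=\; D_0^{-1} \bigl( M_0 M_1 \cdots M_n \bigr)\, D_{n+1}.$$
Reading the first column via \eqref{matrix-fcb} expresses $A'_n, B'_n, C'_n$ as explicit diagonal rescalings of $A_n, B_n, C_n$ by quantities that depend only on $n \bmod 3$. The $(D_{n+1})_{1,1}$ factor common to all three entries cancels in the ratios $A'_n/C'_n$ and $B'_n/C'_n$, and passing to the limit by means of the hypothesis $A_n/C_n \to \alpha$ and $B_n/C_n \to \beta$ then yields the stated convergence to $(r\alpha, 1/(r\beta))$.

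The main technical obstacle I foresee is confirming that the single choice of $D_0$ is simultaneously compatible with both the pre-period and the period, which reduces to checking six explicit identities (one for $a'_i/a_i$ and one for $b'_i/b_i$ at $i = 0, 1, 2$). Once those are verified, the periodicity $D_{i+3} = D_i$ and the periodicity of the partial quotients $a_i, b_i$ from $i \geq 2$ onwards propagate the identity automatically to all larger $i$, so no separate bookkeeping is needed at the junction between the pre-period and the period, and no extra ``joining'' matrix appears in the telescoping.
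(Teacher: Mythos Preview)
Your diagonal-conjugation approach is correct and is essentially a cleaner packaging of the paper's own argument: the paper proves by a direct induction, with a case split on $n\bmod 3$, that $\tilde A_n=r^{k_1}A_n$, $\tilde B_n=r^{k_2}B_n$, $\tilde C_n=r^{k_3}C_n$ for exponents depending only on the residue of $n$, and that is precisely what your telescoped identity $M'_0\cdots M'_n=D_0^{-1}(M_0\cdots M_n)D_{n+1}$ says once you read off the first column. Your version has the advantage of making the period-$3$ structure come from the cyclic shift on $D_i$ rather than from three separate inductive checks.

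One point does need care. If you actually carry your computation through with $D_0^{-1}=\operatorname{diag}(1,r^{-2},r^{-1})$, you obtain
\[
\frac{\tilde A_n}{\tilde C_n}=r\,\frac{A_n}{C_n}\longrightarrow r\alpha,
\qquad
\frac{\tilde B_n}{\tilde C_n}=\frac{1}{r}\,\frac{B_n}{C_n}\longrightarrow \frac{\beta}{r},
\]
so the second coordinate tends to $\beta/r$, not to $1/(r\beta)$ as printed. No diagonal rescaling of the convergents can convert $B_n/C_n\to\beta$ into a limit $1/(r\beta)$, so this is a slip in the stated theorem rather than in your argument; the paper's own applications immediately after the theorem (passing from $(\alpha/r,1/\alpha)$ to $(\alpha,1/(r\alpha))$, and the worked example with $r=-1$) in fact use the result in the form $(r\alpha,\beta/r)$. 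So your method is sound; just record the correct target $\beta/r$ for the second coordinate instead of copying the printed one.
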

\begin{proof}
Let $\frac{A_n}{C_n}$ and $\frac{B_n}{C_n}$ be the $n$--th convergents of the ternary continued fraction of $(\alpha,\beta)$. Let $\frac{\tilde A_n}{\tilde C_n}$ and $\frac{\tilde B_n}{\tilde C_n}$ be the $n$--th convergents of the ternary continued fractions \eqref{rfcb}, then 
$$\tilde A_n=r^{k_1}A_n,\quad \tilde B_n=r^{k_2}B_n,\quad \tilde C_n=r^{k_3}C_n,\quad \forall n\geq0$$
where
$$k_1=\begin{cases} 0,\quad n\equiv 2 \pmod 3 \cr 1,\quad n\equiv 0 \pmod 3 \cr 2,\quad n\equiv 1 \pmod 3 \end{cases}, \quad k_2=\begin{cases} -1,\quad n\equiv 2 \pmod 3 \cr 0,\quad n\equiv 0 \pmod 3 \cr 1,\quad n\equiv 1 \pmod 3 \end{cases}$$
$$k_3=\begin{cases} -2,\quad n\equiv 2 \pmod 3 \cr -1,\quad n\equiv 0 \pmod 3 \cr 0,\quad n\equiv 1 \pmod 3 \end{cases}$$
It is straightforward to check these identities for $n=0,1,2$. Let us proceed by induction, considering an integer $m\equiv 0\pmod 3$. Then
$$\tilde A_m=ra_3\tilde A_{m-1}+\frac{b_3}{r}\tilde A_{m-2}+\tilde A_{m-3}=ra_3A_{m-1}+b_3rA_{m-2}+rA_{m-3}=rA_m.$$
Similarly when $m\equiv 1\pmod 3$ and $m\equiv 2\pmod 3$, and for the sequences $\tilde B_n$ and $\tilde C_n$.
\end{proof}
Thus, if $\alpha$ is the root smallest in modulus of $x^3-px^2-qx-r$, by Theorem \ref{main} we get the periodic ternary continued fraction of $(\cfrac{\alpha}{r},\cfrac{1}{\alpha})$ and by Theorem \ref{r} we get the periodic ternary continued fraction of $(\alpha,\cfrac{1}{r\alpha})$.
\begin{example}
Let us consider the cubic polynomial $x^3-2x^2+x+1$. It has one real root $\alpha$ whose modulus is smaller than the modulus of the complex roots. Theorem \ref{main} does not work on this polynomial, but we can consider the reflected polynomial $x^3+x^2-2x+1$ whose roots are the inverse roots of $x^3-2x^2+x+1$. Thus, $\cfrac{1}{\alpha}$ is the real root of $x^3+x^2-2x+1$ largest in modulus and we can apply Theorem \ref{main}. Posing, e.g., $z=5$, we obtain
$$(-\alpha,\cfrac{1}{\alpha})=[\{5,-\cfrac{13}{3},\overline{-\cfrac{20}{87},20,-\cfrac{20}{3}}\},\{-1,13,\overline{-\cfrac{127}{261},\cfrac{127}{87},\cfrac{127}{3}}\}].$$
Finally, we multiply by -1 this ternary continued fraction and by Theorem \ref{r} we obtain
$$(\alpha,-\cfrac{1}{\alpha})=[\{-5,\cfrac{13}{3},\overline{-\cfrac{20}{87},-20,\cfrac{20}{3}}\},\{1,13,\overline{\cfrac{127}{261},-\cfrac{127}{87},\cfrac{127}{3}}\}],$$
i.e., we found a periodic representation for $\alpha$ root smallest in modulus of $x^3-2x^2+x+1$.
\end{example}
Now, we are able to determine a periodic representation for any cubic irrational that is the root largest or smallest in modulus of a cubic polynomial $x^3-px^2-qx-r$. \\
\indent Finally, we treat the last case, i.e., $\alpha$ is the intermediate root of a cubic polynomial having three real roots. Let $\alpha_1,\alpha_2,\alpha_3$ be the real root of $x^3-px^2-qx-r$ such that $|\alpha_3|<|\alpha_2|<|\alpha_1|$. Using previous techniques we can get periodic expansions for $\alpha_1$ and $\alpha_3$. Moreover, a rational number $k$ can be ever found such that $\alpha_2\pm k$ is the root largest or smallest in modulus of its minimal polynomial $(x-(\alpha_1\pm k))(x-(\alpha_2\pm k))(x-(\alpha_3\pm k))$. The coefficients of $(x-(\alpha_1\pm k))(x-(\alpha_2\pm k))(x-(\alpha_3\pm k))$ can be derived from the coefficients of $x^3-px^2-qx-r$ (see, e.g., Th. 10 and Cor. 11 \cite{bcm2}). Thus, by Theorem \ref{main} we know the periodic ternary continued fraction of $(\cfrac{r'}{\alpha_2\pm k},\alpha_2\pm k)$. Then it is immediate to obtain the periodic expansion of $(\cfrac{r'}{\alpha_2\pm k},\alpha_2)$ (see \eqref{fcb}).
\begin{example}
Let us consider the Ramanujan cubic polynomial $x^3+x^2-2x-1$ with three real roots. The roots of this polynomial are quite famous (see, e.g., \cite{WS}) and it is well--known that they are
$$\alpha_1=2\cos \cfrac{2\pi}{7},\quad \alpha_2=2\cos \cfrac{4\pi}{7},\quad \alpha_3=2\cos \cfrac{8\pi}{7},$$
where
$$\alpha_1 \simeq 1.24698,\quad \alpha_2\simeq -0.445042,\quad \alpha_3\simeq -1.80194.$$
Thus, from Theorem \ref{main}, for $z=3$, we obtain
$$(\cfrac{1}{\alpha_3},\alpha_3)=[\{3,-9,\overline{-\cfrac{2}{13},14,-14}\},\{-1,19,\overline{-\cfrac{9}{13},\cfrac{9}{13},63}\}].$$
Considering the polynomial $x^3+2x^2-x-1$ and $z=1$, we obtain
$$(\alpha_2,\cfrac{1}{\alpha_2})=[\{1,-7,\overline{-\cfrac{9}{13},9,-9}\},\{-2,8,\overline{-\cfrac{20}{13},\cfrac{20}{13},20}\}].$$
Finally, we can consider the minimal polynomial of 
$$\alpha_1+1,\quad \alpha_2+1,\quad \alpha_3+1$$
that is $x^3-2x^2-x+1$, whose root largest in modulus is $\alpha_1+1$. For $z=2$, by Theorem \ref{main}, we obtain
$$(-\cfrac{1}{\alpha_1+1},\alpha_1+1)=[\{2,9,\overline{\cfrac{12}{43},12,12}\},\{2,-16,\overline{-\cfrac{41}{43},-\cfrac{41}{43},-41}\}]$$
and
$$(-\cfrac{1}{\alpha_1+1},\alpha_1)=[\{2,9,\overline{\cfrac{12}{43},12,12}\},\{1,-16,\overline{-\cfrac{41}{43},-\cfrac{41}{43},-41}\}].$$
\end{example}

\section{The periodic algorithm}
An approach to the Hermite problem contemplates the research of a function whose iteration on algebraic irrationalities provides a periodical algorithm. The partial quotients of the ternary continued fraction \eqref{hermite} can be derived from the Jacobi algorithm \eqref{algobiforcanti} using two functions $f_z^\alpha,g_z^\alpha$ instead of the floor function.
\begin{definition} \label{map}
Let $\alpha$ and $\mathbb Q(\alpha)$ be a root of $x^3-px^2-qx-r$ and the algebraic extension of $\mathbb Q$, respectively. We define the linear functions $f_z^\alpha,g_z^\alpha:\mathbb Q (\alpha)\rightarrow \mathbb Q$, for $z\in\mathbb Z$, such that
\begin{enumerate}
\item $f_z^\alpha(q)=g_z^\alpha(q)=q,\quad \forall q\in\mathbb Q$
\item $f_z^\alpha\left(\cfrac{r}{\alpha}\right)=g_z^\alpha\left(\cfrac{r}{\alpha}\right)=z$
\item $f_z^\alpha(\alpha)=g_z^\alpha(\alpha)=p$
\item $f_z^\alpha(\alpha^2)=2z+p^2+2q, \quad g_z^\alpha(\alpha^2)=z+p^2+q$
\end{enumerate}
\end{definition}
The ternary continued fraction \eqref{hermite} is obtained from the following algorithm:
\begin{equation} \label{fzgz} \begin{cases} a_n=f_z^\alpha(x_n) \cr b_n=g_z^\alpha(y_n) \cr x_{n+1}=\cfrac{1}{y_n-b_n} \cr y_{n+1}=\cfrac{x_n-a_n}{y_n-b_n}  \end{cases}, \end{equation}
for $n=0,1,2,...$ and $x_0=\cfrac{r}{\alpha}$, $y_0=\alpha$, where $\alpha$ root of the polynomial $x^3-px^2-qx-r$.\\
Let us start to use the algorithm \eqref{fzgz} with inputs $(\cfrac{r}{\alpha},\alpha)$, we immediately have
$$a_0=z,\quad b_0=p.$$
Now, we evaluate $x_1$ and $y_1$:
$$x_1=\cfrac{1}{\alpha-p},\quad y_1=\cfrac{\cfrac{r}{\alpha}-z}{\alpha-p}.$$
We need to manipulate $x_1$ and $y_1$ in order to find the values of $f_z^\alpha(x_1)$ and $g_z^\alpha(y_1)$. In particular, we will often use that $\alpha^3=p\alpha^2+q\alpha+r$. We have
$$x_1=\cfrac{1}{\alpha-p}\cdot\cfrac{\alpha^2-q}{\alpha^2-q}=\cfrac{\alpha^2-q}{pq+r}$$
$$y_1=\cfrac{\cfrac{r}{\alpha}-z}{\alpha-p}\cdot\cfrac{\alpha^2-q}{\alpha^2-q}=\cfrac{r\alpha-z\alpha^2+qz-\cfrac{qr}{\alpha}}{pq+r}.$$
Now we can apply the properties of $f_z^\alpha$ and $g_z^\alpha$ and we obtain
$$a_1=f_z^\alpha(x_1)=\cfrac{2z+p^2+q}{pq+r},\quad b_1=g_z^\alpha(y_1)=-\cfrac{z^2+qz+p^2z-pr}{pq+r}.$$
Let us continue with
$$x_2=\cfrac{1}{y_1-b_1}=\cfrac{pq+r}{(\cfrac{r}{\alpha}-z)(\alpha^2-q)+(z^2+qz+p^2z-pr)(\alpha^2-q)}\cdot\cfrac{\alpha^2+z}{\alpha^2+z}=\cfrac{(pq+r)(\alpha^2+z)}{\det(N)},$$
where the last identities follow by using $\alpha^3=p\alpha^2+q\alpha+r$. Moreover,
$$y_2=(x_1-a_1)\cdot x_2=\cfrac{(\alpha^2-q)(\alpha^2+z)-(2z+p^2+q)(\alpha^2+z)}{\det(N)}$$
and
$$a_2=\cfrac{(pq+r)(3z+p^2+2q)}{\det(N)}=\cfrac{(pq+r)\text{Tr}(N)}{\det(N)},\quad b_2=-\cfrac{I_1(N)}{\det(N)}.$$
Then,
$$x_3=\cfrac{\det(N)}{(\alpha^2-q)(\alpha^2+z)-(2z+p^2+q)(\alpha^2+z)+I_1(N)}\cdot\cfrac{\alpha^2+z}{\alpha^2+z}=\cfrac{\det(N)(\alpha^2+z)}{\det(N)}=\alpha^2+z,$$
$$y_3=\cfrac{(pq+r)((\alpha^2+z^2)^2-\text{Tr}(N)(\alpha^2+z))}{\det(N)}$$
and
$$a_3=3z+2q+p^2=\text{Tr}(N),\quad b_3=-\cfrac{(pq+r)I_1(N)}{\det(N)}.$$
Finally,
$$x_4=\cfrac{\det(N)}{(pq+r)((\alpha^2+z)^2-\text{Tr}(N)(\alpha^2+z)+I_1(N))}\cdot\cfrac{\alpha^2+z}{\alpha^2+z}=\cfrac{\alpha^2+z}{pq+r},$$
where the last identity is obtained recalling that $\alpha^2+z$ is the root of the characteristic polynomial of $N$. 
$$y_4=(\alpha^2+z-\text{Tr(N)})\cdot\cfrac{\alpha^2+z}{pq+r}$$
from which
$$a_4=\cfrac{\text{Tr}(N)}{pq+r},\quad b_4=-\cfrac{I_1(N)}{pq+r}.$$
Now we check that $x_5=x_2$ and $y_5=y_2$:
$$x_5=\cfrac{pq+r}{(\alpha^2+z)^2-\text{Tr}(N)(\alpha^2+z)+I_1(N)}=\cfrac{(pq+r)(\alpha^2+z)}{\det(N)}=x_2$$
$$y_5=\cfrac{\alpha^2+z-\text{Tr}(N)}{pq+r}\cdot\cfrac{(pq+r)(\alpha^2+z)}{\det(N)}=y_2.$$

\section{Conclusions}
The problem, connected to the Hermite problem, of finding a periodic writing for cubic irrationalities has been solved, providing a periodic representation via ternary continued fraction with rational partial quotients. A periodic representation involving integer numbers can be directly derived from it. The periodic ternary continued fraction can be obtained from a modification of the classical Jacobi algorithm. In particular a family of algorithms based on Eqs. \eqref{fzgz} can be derived. These algorithms become periodic when the input is a couple $(\cfrac{r}{\alpha},\alpha)$, for any cubic irrationality $\alpha$. The ternary continued fraction \eqref{hermite} is very manageable, since the pre--period has length 2, the period has length 3, and it provides simultaneous rational approximations. Of course, many questions and further developments remain open:
\begin{itemize}
\item Generalization of Theorem \ref{main} to any algebraic irrationality is the natural development of the present work. 
\item The problem of the periodicity of the original Jacobi algorithm is still open. It would be possible to use the present work in order to solve this question. Indeed, the sequence of numerator and denominator of the convergents of \eqref{hermite} are linear recurrent sequences. Thus, it could be possible to prove that a ternary continued fraction provided by the Jacobi algorithm and equal to \eqref{hermite} (i.e. converging to the same couple of cubic irrationalities) has numerators and denominators of the convergents that are linear recurrent sequences. Moreover, it could be possible to generalize to ternary continued fraction the result of Lenstra and Shallit \cite{Lenstra}, which proved that a continued fraction is periodic if and only if numerators (or denominators) of the convergents are linear recurrent sequences.
\item The convergence's rate of the ternary continued fraction \eqref{hermite} has not been studied in the present paper and it will be developed in future works, studying the role of the parameter $z$.
\item Cerruti polynomials \eqref{cerruti} appear to be very interesting and they could be applied in different fields of number theory. Indeed, they are a generalization of the R\'edei rational functions. Since R\'edei rational functions are very useful in several fields of number theory, Cerruti polynomials could have many different applications. 

\end{itemize}


\end{document}